\documentclass[11pt,reqno]{article}

\usepackage[latin1]{inputenc}

\usepackage{pgf}

\usepackage{amsmath,amsfonts,amssymb,amscd,amsthm,graphicx}

\textwidth=13cm

\usepackage{color}
\usepackage{float}
\usepackage{sectsty}
\allsectionsfont{\centering}

\usepackage{fancyhdr}
\pagestyle{fancy}
\rhead{Biharmonic mean}

\usepackage{amsfonts}
\usepackage{latexsym}

\usepackage{amsmath}
\usepackage{amsthm}
\usepackage{makeidx}
\usepackage{latexsym}
\usepackage{graphicx}
\usepackage{amssymb}

\usepackage{hyperref}
\usepackage{float}
\newtheorem{theorem}{Theorem}

\newtheorem{proposition}{Proposition}
\newtheorem{corollary}{Corollary}

\newtheorem{definition}{Definition}
\theoremstyle{remark}

\usepackage[latin1]{inputenc}

\title{\textbf{THE BIHARMONIC MEAN}}

\author{MARCO ABRATE, STEFANO BARBERO, \\UMBERTO CERRUTI, NADIR MURRU}

\date{}

\begin{document}

\maketitle

{\scriptsize{\noindent \emph{University of Turin, Department of Mathematics\\ Via Carlo Alberto 10, Turin, Italy \\ marco.abrate@unito.it, stefano.barbero@unito.it \\ umberto.cerruti@unito.it, nadir.murru@unito.it}}

\begin{abstract}
{\scriptsize 
We briefly describe some well--known means and their properties, focusing on the relationship with integer sequences. In particular, the harmonic numbers, deriving from the harmonic mean, motivate the definition of a new kind of mean that we call the \emph{biharmonic mean}. The biharmonic mean allows to introduce the biharmonic numbers, providing a new characterization for primes. Moreover, we highlight some interesting divisibility properties and we characterize the semi--prime biharmonic numbers showing their relationship with linear recurrent sequences that solve certain Diophantine equations.
}
\end{abstract}

\noindent \textbf{Keyword:} arithmetic mean, divisibility, geometric mean, harmonic mean, harmonic numbers, integer sequences.

\noindent \textbf{AMS Subject Classification:} 11N80, 26E60.

\section{INTRODUCTION}
\normalsize
The need to explore the Nature and establish from direct observations
its rules, encouraged the ancient thinkers in finding appropriate mathematical tools, able to extrapolate  numerical data. The \emph{arithmetic mean} is one of the oldest quantities introduced for this purpose, in order to find an unique approximate value of some physical quantity from the empirical data. It has been probably used for the first time in the third century B.C., by the ancient Babylonian astronomers, in their studies on the positions and motions of celestial bodies. The mathematical relevance of the arithmetic mean has been enhanced by the Greek astronomer Hipparchus (190--120 B.C.). Some other Greek mathematicians, following the Pythagoric ideals, have also introduced and rigorously defined further kinds of means. For example, Archytas (428--360 B.C.) named the \emph{harmonic mean} and used it in the theory of music and in the algorithms for doubling the cube. His disciple Eudoxus (408--355 B.C.) introduced the \emph{contraharmonic mean} in his studies on proportions. At the same time as the practical use of many numerical means in various sciences, a deep exploration of their arithmetic and geometric properties took place during the centuries. The book of Bullen \cite{Bul} is a classical reference for a good survey about the various kinds of means and their history.
 
In this paper, we especially focus our attention on some arithmetic aspects related to the most used means in many fields of mathematics.
We also define a new kind of mean, showing how it also allows to give a new characterization for the prime numbers. 

We start recalling the classical definitions and properties of the involved means.

\begin{definition} \label{medie}
Let $a_1,a_2,\ldots, a_t$ be $t$ positive real numbers, then we define their
\begin{itemize}
\item \emph{arithmetic mean} 
\begin{equation}\label{Arith1}{\cal{A}}(a_1,a_2,\ldots, a_t)= \frac{a_1+a_2+\cdots+ a_t}{t}; \quad \end{equation}
\item \emph{geometric mean} 
\begin{equation}\label{Geom1}{\cal{G}}(a_1,a_2,\ldots, a_t)= \sqrt[t]{a_1a_2 \cdots a_t}; \quad \end{equation}
\item  \emph{harmonic mean} 
\begin{equation}\label{Harm1}{\cal{H}}(a_1,a_2,\ldots, a_t)= \left(\frac{1}{t}\left(\frac{1}{a_1}+\frac{1}{a_2}+\cdots+ \frac{1}{a_t}\right)\right)^{-1}; \quad \end{equation}
\item \emph{contraharmonic mean} 
\begin{equation}\label{Contr1}{\cal{C}}(a_1,a_2,\ldots, a_t)= \frac{a_1^2+a_2^2+\cdots+ a_t^2}{a_1+a_2+\cdots+ a_t}. \quad \end{equation}
\end{itemize}
\end{definition}
We have the well--known inequalities 
\[
{\cal{H}}(a_1,a_2,\ldots, a_t)\leq {\cal{G}}(a_1,a_2,\ldots, a_t) \leq {\cal{A}}(a_1,a_2,\ldots, a_t) \leq {\cal{C}}(a_1,a_2,\ldots, a_t).
\]
A very interesting problem is to determine whether at least one of these equalities holds. If we have only two positive real numbers $a$ and $b$, we always obtain the following relations:

\begin{equation} \label{ari}
{\cal{A}}(a,b)={\cal{A}}({\cal{H}}(a,b),{\cal{C}}(a,b)) ,
\end{equation}

\begin{equation} \label{geo}
{\cal{G}}(a,b)={\cal{G}}({\cal{H}}(a,b),{\cal{A}}(a,b)).
\end{equation}

In $1948$ Oystein Ore (\cite{ore1}, \cite{ore2}) introduced the idea of the \emph{harmonic number} finding related properties and a quite surprising answer to this question. He evaluated the four means of all the divisors of a positive integer $n$. Let us denote the set of divisors of $n$ as
$$D(n)=\left\lbrace d_1,d_2,\ldots , d_t\right\rbrace.$$
For the sake of simplicity we pose
\begin{eqnarray*}
  A(n) &=& \mathcal{A}(d_1,d_2,\ldots,d_t),\\
  G(n) &=& \mathcal{G}(d_1,d_2,\ldots,d_t),\\
  H(n) &= &\mathcal{H}(d_1,d_2,\ldots,d_t),\\
  C(n) &=& \mathcal{C}(d_1,d_2,\ldots,d_t).
\end{eqnarray*}
Recalling that the divisor function is
\begin{equation} \label{sigma}\sigma_{x}(n)=\sum_{i=1}^t d_i^x, \quad  x\in \mathbb{N}\end{equation}
the first immediate result of Ore can be summarized in the following theorem.
\begin{theorem}\label{expl}
\begin{equation}\label{Arithore}A(n)= \frac{\sigma_1(n)}{\sigma_0(n)}, \quad \end{equation}
\begin{equation}\label{Geomore} G(n)= \sqrt{n}, \quad \end{equation}
\begin{equation}\label{Harmore} H(n)=\frac{n\sigma_0(n)}{\sigma_1(n)}, \quad \end{equation}
\begin{equation}\label{Controre}C(n)= \frac{\sigma_2(n)}{\sigma_1(n)}. \quad \end{equation}
\end{theorem}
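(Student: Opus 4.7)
The formulas for $A(n)$ and $C(n)$ are essentially immediate: the number of divisors is $t=\sigma_0(n)$, so substituting $D(n)$ into (\ref{Arith1}) gives $A(n)=\sum d_i/t=\sigma_1(n)/\sigma_0(n)$, and substituting into (\ref{Contr1}) gives $C(n)=\sum d_i^2/\sum d_i=\sigma_2(n)/\sigma_1(n)$ directly from the definition (\ref{sigma}). So the only real content of the theorem is in the formulas for $G(n)$ and $H(n)$.

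The central tool for these two is the standard involution on the divisor set: the map $d\mapsto n/d$ is a bijection of $D(n)$ onto itself. I would state this once and then use it twice. For the geometric mean, apply the bijection to the product:
\[
\prod_{i=1}^{t} d_i \;=\; \prod_{i=1}^{t} \frac{n}{d_i} \;=\; \frac{n^{t}}{\prod_{i=1}^{t} d_i},
\]
whence $\bigl(\prod d_i\bigr)^2 = n^{t}$ and $\prod d_i = n^{t/2}$. Taking the $t$-th root in (\ref{Geom1}) yields $G(n)=\sqrt{n}$. Note this argument is insensitive to whether $n$ is a perfect square (i.e.\ whether $t$ is even or odd); the divisor $\sqrt n$, when present, simply pairs with itself and contributes consistently on both sides.

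For the harmonic mean, apply the same bijection to the sum of reciprocals:
\[
\sum_{i=1}^{t} \frac{1}{d_i} \;=\; \sum_{i=1}^{t} \frac{1}{n/d_i} \cdot \frac{1}{1} \;=\; \frac{1}{n}\sum_{i=1}^{t} d_i \;=\; \frac{\sigma_1(n)}{n}.
\]
Substituting into (\ref{Harm1}) and using $t=\sigma_0(n)$ gives $H(n)=t\bigl(\sigma_1(n)/n\bigr)^{-1}=n\sigma_0(n)/\sigma_1(n)$, as claimed.

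There is no genuine obstacle here; the only point that merits a brief remark is that the bijection $d\mapsto n/d$ is well defined on $D(n)$ precisely because divisibility is preserved under complementation, and that it is its own inverse, so no case distinction on parity of $t$ or on $n$ being a square is needed. I would present the proof in the order (\ref{Arithore}), (\ref{Controre}), (\ref{Geomore}), (\ref{Harmore}), grouping the two tautological identities first and the two involution-based identities second.
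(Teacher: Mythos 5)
Your proof is correct and rests on the same idea as the paper's, namely the involution $d\mapsto n/d$ on $D(n)$: your treatment of $H(n)$ is just a rephrasing of the paper's reduction to the common denominator $n$, and your treatment of $A(n)$ and $C(n)$ matches the paper's remark that they are immediate from the definitions. The one genuine (small) improvement is in $G(n)$: by applying the involution to the whole product and deducing $\bigl(\prod_i d_i\bigr)^2=n^t$, you avoid the paper's explicit case split between $n$ a perfect square and not, which it needs because it pairs divisors one at a time and must handle the self-paired divisor $\sqrt{n}$ separately.
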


\begin{proof} 
We give a proof only for equalities (\ref{Harmore}) and (\ref{Geomore}) since
(\ref{Arithore}) and (\ref{Controre}) clearly arise from (\ref{Arith1}) and (\ref{Contr1}), respectively.
From equation (\ref{Harm1}) clearly
$$ H(n) = \left(\frac{1}{t}\left(\frac{1}{d_1}+\frac{1}{d_2}+\cdots+ \frac{1}{d_t}\right)\right)^{-1}    $$
and the sum $\frac{1}{d_1}+\frac{1}{d_2}+\cdots+ \frac{1}{d_t}$, when reduced to the least common denominator $n$, gives as numerator $\sigma_1(n)$ (the sum of all divisors of $n$). Moreover, since $t=\sigma_0(n)$ (the number of all divisors of $n$), relation (\ref{Harmore}) easily follows. Now, to prove equation (\ref{Geomore}), we start from (\ref{Geom1})
$$G(n) = \sqrt[t]{d_1 d_2 \cdots d_t}=\sqrt[\sigma_0(n)]{\prod_{i=1}^{\sigma_0(n)} d_i}$$
distinguishing two cases: $n=m^2$ or $n\not =m^2$.
When $n\not=m^2$, $n$ has an even number of divisors, so we multiply  $d_i$ by $\frac{n}{d_i}$ for all $i=1,\ldots,\frac{\sigma_0(n)}{2}$, finding  $\prod_{i=1}^{\sigma_0(n)} d_i=n^{\frac{\sigma_0(n)}{2}}$.
On the other hand, if $n=m^2$, then $t=\sigma_0(n)$ is odd. Similarly, we multiply $d_i$ by $\frac{n}{d_i}$, but in this case we can do this only when $d_i \neq m$, finding
$$ \prod_{i=1}^{\sigma_0(n)} d_i= n^{\frac{\sigma_0(n)-1}{2}}m=(m^2)^{\frac{\sigma_0(n)-1}{2}}m=m^{\sigma_0(n)}.$$
Thus
$$G(n) = \sqrt[\sigma_0(n)]{\prod_{i=1}^{\sigma_0(n)}d_i}=\sqrt[\sigma_0(n)]{m^{\sigma_0(n)}}=m=\sqrt{n}.$$
\end{proof}

A straightforward consequence, observed by Ore in \cite{ore1}, shows that a similar equality to (\ref{geo}) holds taking into account elements of $D(n)$.
\begin{corollary}
For any positive integer $n$ 
\begin{equation} \label{geo2}
G(n)=\mathcal{G}(H(n),A(n)). 
\end{equation}
\end{corollary}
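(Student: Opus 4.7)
The plan is a direct verification using the explicit formulas of Theorem \ref{expl}. Since $H(n)$ and $A(n)$ are just two positive real numbers, their geometric mean is simply $\mathcal{G}(H(n),A(n))=\sqrt{H(n)\,A(n)}$, and I expect the $\sigma_0$ and $\sigma_1$ factors to cancel cleanly, leaving $\sqrt{n}=G(n)$.

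Concretely, I would substitute the closed forms (\ref{Harmore}) and (\ref{Arithore}) into the product
\[
H(n)\,A(n)=\frac{n\,\sigma_0(n)}{\sigma_1(n)}\cdot\frac{\sigma_1(n)}{\sigma_0(n)}=n,
\]
so that $\mathcal{G}(H(n),A(n))=\sqrt{n}$, which by (\ref{Geomore}) equals $G(n)$. That is the whole argument.

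Alternatively, one can interpret the corollary as a special case of the two-variable identity (\ref{geo}): applying (\ref{geo}) to $a=d_i$, $b=n/d_i$ for each divisor and combining does not quite give the statement directly, so I would prefer the substitution approach above, which also makes clear why the result holds at all divisor levels without case distinctions on whether $n$ is a square.

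There is no real obstacle here: the only mild subtlety is remembering that $\mathcal{G}(H(n),A(n))$ refers to the geometric mean of the two numbers $H(n)$ and $A(n)$ (hence a square root), not the geometric mean of some derived sequence of divisors; once this is observed the proof reduces to one line of algebra already guaranteed by Theorem \ref{expl}.
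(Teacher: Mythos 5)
Your proposal is correct and is essentially identical to the paper's own proof: both substitute the closed forms $H(n)=\frac{n\sigma_0(n)}{\sigma_1(n)}$ and $A(n)=\frac{\sigma_1(n)}{\sigma_0(n)}$ from Theorem \ref{expl}, observe that the product collapses to $n$, and conclude $\mathcal{G}(H(n),A(n))=\sqrt{n}=G(n)$. Nothing is missing.
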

\begin{proof}
By previous theorem, we clearly obtain
$$\mathcal{G}(H(n),A(n))=\sqrt{H(n)\cdot A(n)}=\sqrt{\frac{n \sigma_0(n)}{\sigma_1(n)} \frac{\sigma_1(n)}{\sigma_0(n)}}= \sqrt{n}=G(n).$$
\end{proof}

Equality (\ref{geo2}) can be interpreted  also as a formal identity.
For example when $n=p^2 q$, with $p$ and $q$ primes, $D(n)=\{1,p,q,p q,p^2,p^2q\}$ and, by (\ref{geo2}), we have
\begin{equation}\label{pq}\mathcal{G}(1,p,q,p q,p^2,p^2q)=\mathcal{G}(\mathcal{H}(1,p,q,p q,p^2,p^2q),\mathcal{A}(1,p,q,p q,p^2,p^2q)).\end{equation}
The astonishing fact is that this equality also holds substituting  $p$ and $q$ with any other couple of positive real numbers.
For example, if we use $p=\sqrt{2}$ and $q=\pi$, we obtain the identity
$$ \mathcal{G}(1,\sqrt{2},\pi,\sqrt{2}\,\pi,2,2 \pi)=\mathcal{G}(\mathcal{H}(1,\sqrt{2},\pi,\sqrt{2}\,\pi,2,2 \pi),\mathcal{A}(1,\sqrt{2},\pi,\sqrt{2}\,\pi,2,2 \pi)). $$
which is not so immediate. As previously observed, for randomly chosen positive distinct real numbers $a_1,\ldots,a_t$ the equality
$$\mathcal{G}(a_1,a_2,\ldots,a_t)=\mathcal{G}(\mathcal{H}(a_1,a_2,\ldots,a_t),\mathcal{A}(a_1,a_2,\ldots,a_t)),$$
is false.

The second question is pretty natural: when do the means of the divisors of an integer $n$ also give a result which is an integer?
The case of $G(n)$ is not so interesting because $G(n)$ is an integer if and only if $n$ is a square. The integers $n$ for which $A(n)$ is an integer form the sequence A003601 in OEIS \cite{oeis}:
\begin{equation} \label{aritmetici}
 1, 3, 5, 6, 7, 11, 13, 14, 15, 17, 19, 20, 21, 22, 23, 27, 29, 30, 31, 33, 35, 37, \ldots  
\end{equation}
These integers are the so--called \textit{arithmetic numbers}.

Moreover, every integer $n$ giving an integer value for $C(n)$ belongs to the sequence A020487:
$$1, 4, 9, 16, 20, 25, 36, 49, 50, 64, 81, 100, 117, 121, 144, 169, 180, 196, 200, 225, \ldots.$$

The most interesting case is related to the harmonic mean. Ore provided the following definition.
\begin{definition} \label{armonico} 
A positive integer $n$ is called a \emph{harmonic (divisor) number} (or \emph{Ore number}) if $H(n)$ is an integer.
\end{definition}
The first harmonic numbers are
$$1, 6, 28, 140, 270, 496, 672, 1638, 2970, 6200, 8128, 8190, 18600, 18620, 27846, 30240, \ldots $$
and they form the sequence $A001599.$
The corresponding values of $H(n)$ are listed in $A001600$:
$$ 1, 2, 3, 5, 6, 5, 8, 9, 11, 10, 7, 15, 15, 14, 17, 24, \ldots .$$
Ore also proved that all perfect numbers are harmonic numbers.

In the next section, moving from the above beautiful properties, we will define a new kind of mean which we will call a \emph{biharmonic mean}. We will also define the biharmonic numbers, which provide a new characterization for prime numbers. Moreover, the composite biharmonic numbers will lead to the study of interesting divisibility properties, involving consecutive terms of linear recurrent sequences and solutions of Diophantine equations.

\section{BIHARMONIC MEAN AND BIHARMONIC NUMBERS}

\begin{definition} \label{biarmonica1}
For positive real numbers $a_1,a_2,\ldots,a_t$, we define the \emph{biharmonic mean} as
$$ {\cal{B}}(a_1,a_2,\dots,a_t)={\cal{A}}({\cal{H}}(a_1,\dots,a_t),{\cal{C}}(a_1,\dots,a_t))=\frac{{\cal{H}}(a_1,a_2,\dots,a_t)+{\cal{C}}(a_1,a_2,\dots,a_t)}{2}$$
which corresponds to the arithmetic mean of the harmonic and contraharmonic means of $a_i$'s.
\end{definition}

From Eq. (\ref{ari}), we know that biharmonic mean is equal to the arithmetic mean when $t=2$. But we also know that Eq. \eqref{ari} is not necessarily true when $t>2$. Following Ore's idea, we define the biharmonic numbers as functions similar to $H(n)$.

\begin{definition} \label{biarmonica2}
Let us consider a positive integer $n$ with $D(n)=\left\lbrace d_1,d_2,\dots,d_t\right\rbrace$.
We define $B(n)$ as the biharmonic mean of the divisors of $n$
$$ B(n)={\cal{B}}(d_1,d_2,\dots,d_t).$$
We call an integer $n$ \textit{biharmonic number} if $B(n)$ is an integer.
\end{definition}

From this definition we have
$$ B(n)={\cal{B}}(d_1,d_2,\dots,d_t)=\frac{{\cal{H}}(d_1,d_2,\dots,d_t)+{\cal{C}}(d_1,d_2,\dots,d_t)}{2}=\frac{H(n)+C(n)}{2}, $$
and, by Theorem  \ref{expl}, we can find a closed form for $B(n)$: 
\begin{equation} \label{Hn} 
B(n)=\frac{H(n)+C(n)}{2} = \frac{\frac{n \sigma_0(n)}{\sigma_1(n)}+\frac{\sigma_2(n)}{\sigma_1(n)}}{2}=\frac{n \sigma_0(n)+\sigma_2(n)}{2\sigma_1(n)}.
\end{equation}

Investigating the occurrence of biharmonic numbers among  positive integers, we find the sequence
$$1, 3, 5, 7, 11, 13, 17, 19, 23, 29, 31, 35, 37, 41, 43, 47, 53, 59, 61, \ldots $$
which is very similar to the sequence of prime numbers. This is not so strange, because if $n$ is prime $B(n)=A(n)$.
\begin{theorem} \label{primi} Every odd prime number $p$ is a biharmonic number and $B(p)=\cfrac{p+1}{2}$ .
\end{theorem}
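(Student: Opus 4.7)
The plan is to observe that for any prime $p$ the divisor set is $D(p)=\{1,p\}$, so the biharmonic mean of the divisors is a biharmonic mean of just two numbers. By identity (\ref{ari}) stated in the introduction, $\mathcal{B}$ coincides with $\mathcal{A}$ when $t=2$, since $\mathcal{B}(a,b)=\mathcal{A}(\mathcal{H}(a,b),\mathcal{C}(a,b))=\mathcal{A}(a,b)$. Hence $B(p)=A(p)=\tfrac{1+p}{2}$, and as $p$ is odd, $p+1$ is even, so $B(p)\in\mathbb{Z}$ and $p$ is biharmonic.

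As a cross-check (and an alternative route that bypasses the appeal to (\ref{ari})), I would plug the prime case directly into the closed form (\ref{Hn}). For $n=p$ prime we have $\sigma_0(p)=2$, $\sigma_1(p)=p+1$ and $\sigma_2(p)=p^2+1$, so
\[
B(p)=\frac{p\,\sigma_0(p)+\sigma_2(p)}{2\,\sigma_1(p)}=\frac{2p+p^2+1}{2(p+1)}=\frac{(p+1)^2}{2(p+1)}=\frac{p+1}{2},
\]
which is an integer precisely because $p$ is odd. Either derivation yields the claim in one line after setting up the divisor set.

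There is no real obstacle here; the only mild subtlety is to note why the hypothesis \emph{odd} is used, namely to guarantee integrality of $\tfrac{p+1}{2}$ (the even prime $p=2$ gives $B(2)=3/2$, which is not an integer). I would simply present the two-line computation, preferring the direct substitution in (\ref{Hn}) since it makes the arithmetic transparent and keeps the proof self-contained.
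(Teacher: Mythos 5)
Your second derivation (substituting $\sigma_0(p)=2$, $\sigma_1(p)=p+1$, $\sigma_2(p)=p^2+1$ into the closed form \eqref{Hn}) is exactly the paper's proof, and your first route via identity \eqref{ari} is a valid alternative the paper itself hints at just before the theorem. The proposal is correct and takes essentially the same approach.
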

\begin{proof}
If $p$ is an odd prime we have $\sigma_0(p)=2, \sigma_1(p)=1+p, \sigma_2(p)=1+p^2$, thus
$$ B(p)=\frac{2p+1+p^2}{2(1+p)}=\frac{p+1}{2} .$$
\end{proof}

The very interesting fact is that we can \emph{characterize} odd prime numbers using $B(n)$. Indeed, we can prove that the converse of the previous theorem is also true.
\begin{theorem} \label{primi2}
If for odd integer $n\not =1$, $B(n)=\frac{n+1}{2}$, then $n$ is a prime.
\end{theorem}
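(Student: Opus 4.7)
The plan is to translate the hypothesis $B(n) = \tfrac{n+1}{2}$ into a divisor sum identity and then show that the identity forces $n$ to have no nontrivial divisors. Using the closed form (\ref{Hn}) for $B(n)$, the assumption becomes
\[
\frac{n\sigma_0(n)+\sigma_2(n)}{2\sigma_1(n)} \;=\; \frac{n+1}{2},
\]
which, after clearing denominators, rearranges to
\[
\sigma_2(n) - (n+1)\sigma_1(n) + n\,\sigma_0(n) \;=\; 0.
\]

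The next step is to recognize this as a single sum over divisors of a product that factors nicely. Writing $\sigma_k(n)=\sum_{d\mid n} d^k$, the left hand side equals $\sum_{d\mid n}\bigl(d^2-(n+1)d+n\bigr)$, and the quadratic $d^2-(n+1)d+n$ factors as $(d-1)(d-n)$. Hence the hypothesis is equivalent to
\[
\sum_{d\mid n}(d-1)(d-n) \;=\; 0.
\]

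Now I would examine the sign of each summand. For $d=1$ the factor $(d-1)$ vanishes, and for $d=n$ the factor $(d-n)$ vanishes, so the trivial divisors contribute zero. For any divisor $d$ with $1<d<n$ one has $d-1>0$ and $d-n<0$, so $(d-1)(d-n)<0$. Thus every term of the sum is non-positive, and a term is strictly negative exactly when $d$ is a nontrivial divisor of $n$. Since the total sum is zero, $n$ can have no divisor strictly between $1$ and $n$, which means $n$ is prime. The oddness hypothesis plays no role in the logical chain; it merely reflects the context of Theorem \ref{primi}, in which $(n+1)/2$ is an integer only for odd $n$.

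The only step that requires any care is the factorization $d^2-(n+1)d+n=(d-1)(d-n)$ and the resulting sign analysis; everything else is a bookkeeping rearrangement of (\ref{Hn}). I anticipate no real obstacle, since once the identity is put in the form $\sum_{d\mid n}(d-1)(d-n)=0$ the conclusion is immediate from the non-positivity of each summand.
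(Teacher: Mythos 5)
Your proof is correct, and it takes a genuinely different and cleaner route than the paper's. The paper rewrites the condition as $(n+1)\sigma_1(n)-(\sigma_2(n)+n\sigma_0(n))=0$ and then pairs each divisor $d_i$ with its complement $n/d_i$, which forces a case split on whether $n$ is a perfect square (an unpaired middle divisor $k=\sqrt{n}$ must be handled separately) and leads, after some rearrangement, to the sum $\sum_i (d_i+d_{m+i})(d_i-1)(d_{m+i}-1)$ of non-negative terms, plus an extra $k(k-1)^2$ in the square case. Your observation that the left-hand side is simply $\sum_{d\mid n}(d^2-(n+1)d+n)=\sum_{d\mid n}(d-1)(d-n)$ dispenses with the pairing and the case analysis entirely: each summand is non-positive, vanishes exactly at the trivial divisors $d=1$ and $d=n$, and is strictly negative for any nontrivial divisor, so the sum is zero iff $n$ has no nontrivial divisors (the hypothesis $n\neq 1$ excluding the degenerate case where the sum is vacuously zero). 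In fact the two arguments are algebraically equivalent --- grouping your terms for $d$ and $n/d$ reproduces the paper's factor $(d+n/d)(d-1)(n/d-1)$ --- but your term-by-term factorization is the more economical presentation, and your side remark that oddness is not logically needed is also accurate.
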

\begin{proof}
First, we observe that the equality $B(n)=\frac{n+1}{2}$ corresponds to \begin{equation}\label{B}(n+1)\sigma_1(n)-(\sigma_2(n)+n\sigma_0(n))=0.\end{equation}
Let us consider two cases: $n=k^2$ or $n\not=k^2$.
When  $n=k^2$ with $k\not=1$, we have $\sigma_0(n)=2m+1$ for some $m>0$ and  
$$D(n)=\left\lbrace d_1, d_2,...,d_m,d_{m+1},d_{m+2},...,d_{2m},k\right\rbrace,$$ where we pose $d_i=\frac{n}{d_{m+i}}$ for $i=1,...,m$.\\
Clearly, by definition  
\begin{equation}\label{sigma1-t}\sigma_1(n)=\sum_{i=1}^{2m}d_i+k\end{equation}
and
\begin{equation}\label{sigma2-t}\sigma_2(n)=\sum_{i=1}^{2m}d_{i}^2+k^2.\end{equation}
From (\ref{sigma1-t}), we find that
$$(n+1)\sigma_1(n)=\sum_{i=1}^{2m}nd_i+nk+\sum_{i=1}^{2m}d_i+k=\sum_{i=1}^m d_{i}^2d_{m+i}+\sum_{i=1}^m d_id_{m+i}^2+nk+\sum_{i=1}^{2m}d_i+k .$$
Rearranging the terms and remembering that $n=k^2$, we obtain
\begin{equation}\label{pt}
(n+1)\sigma_1(n)=\sum_{i=1}^m(d_i+d_{m+i})(d_id_{m+i}+1)+k^3+k.\end{equation}
On the other hand, by (\ref{sigma2-t}), the following relation holds
\begin{equation}\label{st}
\begin{split}
\sigma_2(n)+n\sigma_0(n)&=\sum_{i=1}^{2m}d_i^2+k^2+(2m+1)n=
\sum_{i=1}^{2m}d_i^2+\sum_{i=1}^{2m}d_i\frac{n}{d_i}+2k^2=\\
&=\sum_{i=1}^{2m}d_i^2+2\sum_{i=1}^m d_id_{m+i}+2k^2=\sum_{i=1}^m(d_i+d_{m+i})^2+2k^2.
\end{split}
\end{equation}
Now, using (\ref{pt}) and (\ref{st}), we obtain
$$(n+1)\sigma_1(n)-(\sigma_2(n)+n\sigma_0(n))=\sum_{i=1}^m(d_i+d_{m+i})(d_id_{m+i}+1)+k^3+k-2k^2-\sum_{i=1}^m(d_i+d_{m+i})^2,$$
and we finally get
\begin{equation}\label{terza-t}
(n+1)\sigma_1(n)-(\sigma_2(n)+n\sigma_0(n))=\sum_{i=1}^m(d_i+d_{m+i})(d_i-1)(d_{m+i}-1)+k(k-1)^2.
\end{equation}
This equality tells us that the first member of (\ref{B}) will never be equal to $0$ if $n\not=1$.\\
Now, let us consider $n\not=k^2$, $\sigma_0(n)=2m$ for some $m\geq1$. With similar calculations, we find
\begin{equation}\label{terza-t2}
(n+1)\sigma_1(n)-(\sigma_2(n)+n\sigma_0(n))=\sum_{i=1}^m(d_i+d_{m+i})(d_i-1)(d_{m+i}-1).
\end{equation}
When $m>1$, the only summand equal to $0$ corresponds to the couple of trivial divisors $1$ and $n$. Thus, equality (\ref{B}) occurs only when $\sigma_0(n)=2$, or, in other words, if $n$ is prime.
\end{proof}
Clearly, it is interesting to study properties of composite biharmonic numbers. The non--prime biharmonic numbers which are similar to the prime numbers the most are the semiprime biharmonic numbers, i.e., numbers $n$ such that $n=pq$, for $p,q$ primes, and $B(n)\in \mathbb N$. In this case, we have
$$B(n)=B(pq)=\cfrac{(p+q)^2+(pq+1)^2}{2(p+1)(q+1)}.$$
Semiprime biharmonic numbers belong to a wider set of integers that we will call \emph{crystals} for their beautiful properties. Let us consider the following function defined on integers:
\begin{equation} \label{bcrystal} \mathbf{B}(a,b)=\cfrac{(a+b)^2+(ab+1)^2}{2(a+1)(b+1)}. \end{equation}
\begin{definition}
An odd number $n$ is called a \emph{crystal} if $n=ab$, with $a,b>1$ and $\mathbf{B}(a,b)\in\mathbb N$.
\end{definition}
In the following section, we determine all the  crystals by means of a particular linear recurrent sequence.

\section{DIVISIBILITY PROPERTIES}
In this section, we characterize all pairs of odd integers $a,b$ such that $\mathbf{B}(a,b)\in\mathbb N$ by using recurrent sequences and integer points on certain conics.\\
First of all, we highlight that $\mathbf{B}(a,b)\in\mathbb N$ is equivalent to different divisibility properties involving the numbers $a,b$.
\begin{proposition} \label{equiv}
Given two integer odd numbers $a,b$, the following statements are equivalent.
\begin{enumerate}
\item $\mathbf{B}(a,b)\in\mathbb N$
\item $\mathbf{F}(a,b)=\cfrac{(ab+1)^2}{(a+1)(b+1)}\in\mathbb N$
\item $\mathbf{P}(a,b)=\cfrac{(a+b)(ab+1)}{(a+1)(b+1)}\in\mathbb N$
\item $\mathbf{Q}(a,b)=\cfrac{(a+b)^2}{(a+1)(b+1)}\in\mathbb N$
\end{enumerate}
\end{proposition}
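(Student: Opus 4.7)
The plan is to establish a few clean polynomial identities expressing each of $\mathbf{B}, \mathbf{F}, \mathbf{P}, \mathbf{Q}$ as an integer shift of another, and then read off all four equivalences at once, without ever having to grind through divisibility analysis of $(a+1)(b+1)$ against the various numerators.

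First I would verify, by direct manipulation of the definitions, the four identities
\[
\mathbf{P}(a,b) + \mathbf{Q}(a,b) = a+b, \qquad \mathbf{P}(a,b) + \mathbf{F}(a,b) = ab+1,
\]
\[
\mathbf{F}(a,b) - \mathbf{Q}(a,b) = (a-1)(b-1), \qquad 2\,\mathbf{B}(a,b) = \mathbf{Q}(a,b) + \mathbf{F}(a,b).
\]
The first follows from the telescoping $(a+b)(ab+1)+(a+b)^2=(a+b)(a+1)(b+1)$, the second from the symmetric $(a+b)(ab+1)+(ab+1)^2=(ab+1)(a+1)(b+1)$, the third from the factorization $(ab+1)^2-(a+b)^2=(ab+1-a-b)(ab+1+a+b)=(a-1)(b-1)(a+1)(b+1)$, and the fourth is immediate from the definition of $\mathbf{B}$ in (\ref{bcrystal}).

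From the first three identities, the three rationals $\mathbf{P}, \mathbf{Q}, \mathbf{F}$ pairwise differ by integers, so conditions (2), (3), (4) are equivalent, and this part does not even need the oddness hypothesis. For the equivalence with (1) I would use the fourth identity to write $2(\mathbf{B}-\mathbf{Q}) = \mathbf{F}-\mathbf{Q} = (a-1)(b-1)$, whence $\mathbf{B} - \mathbf{Q} = (a-1)(b-1)/2$. This is where the odd hypothesis enters: since $a-1$ and $b-1$ are both even, $(a-1)(b-1)$ is a multiple of $4$, so $(a-1)(b-1)/2 \in \mathbb{Z}$, and therefore $\mathbf{B}$ and $\mathbf{Q}$ are simultaneously integral.

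There is no serious obstacle once the four identities above are spotted; the only mildly tricky step is recognizing the telescoping factorizations $ab+1+a+b=(a+1)(b+1)$ and $ab+1-a-b=(a-1)(b-1)$, which are what make the differences and sums among $\mathbf{P}, \mathbf{Q}, \mathbf{F}, \mathbf{B}$ collapse to manifestly integer expressions. The oddness hypothesis plays only the minor role of absorbing the factor of $2$ in $2\mathbf{B}=\mathbf{Q}+\mathbf{F}$.
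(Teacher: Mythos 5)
Your proof is correct and follows essentially the same route as the paper's: both establish that pairwise sums or differences of $\mathbf{B},\mathbf{F},\mathbf{P},\mathbf{Q}$ are polynomials in $a,b$ (hence integers), with oddness absorbing the single factor of $2$. The only cosmetic difference is that the paper links $\mathbf{B}$ to $\mathbf{P}$ via $\mathbf{B}+\mathbf{P}=\tfrac{(a+1)(b+1)}{2}$, while you link $\mathbf{B}$ to $\mathbf{Q}$ via $\mathbf{B}-\mathbf{Q}=\tfrac{(a-1)(b-1)}{2}$.
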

\begin{proof} 
Since  
$$\mathbf{B}(a,b)+\mathbf{P}(a,b)=\cfrac{(a+1)(b+1)}{2}, \quad \mathbf{P}(a,b)+\mathbf{Q}(a,b)=a+b, \quad \mathbf{F}(a,b)+\mathbf{P}(a,b)=ab+1,$$ we clearly have 
 $$\mathbf{B}(a,b)\in \mathbb N \Leftrightarrow \mathbf{P}(a,b) \in \mathbb N, \quad
\mathbf{P}(a,b)\in \mathbb N \Leftrightarrow \mathbf{Q}(a,b) \in \mathbb N,
\quad \mathbf{F}(a,b)\in \mathbb N \Leftrightarrow \mathbf{P}(a,b) \in \mathbb N.$$
\end{proof}

In order to characterize crystals, we need some preliminary results about the Diophantine equation 
$$(x+y-1)^2-wxy=0$$
with $x,y$ unknown and $w\in \mathbb{N}$ a given parameter.
This equation has been solved over the positive integers in \cite{abcm} by using a particular recurrent sequence. 
\begin{theorem} \label{u}
The pair of positive integers $(x,y)$ is a solution of the Diophantine equation
$$(x+y-1)^2-wxy=0,$$
with $w\in\mathbb N$, if and only if $(x,y)=(u_{n}(w),u_{n-1}(w))$ for a given index $n\geq1$, where $(u_n(w))_{n=0}^{+\infty}$ is the sequence defined by
$$\begin{cases}  u_0(w)=0,\quad u_1(w)=1 \cr u_{n+1}(w)=(w-2)u_n(w)-u_{n-1}(w)+2,\quad \forall n\geq1 \quad . \end{cases}$$
\end{theorem}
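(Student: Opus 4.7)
The plan is to apply Vieta jumping (infinite descent), exploiting the fact that the equation is quadratic in each variable. Rewriting
$$(x+y-1)^2 - wxy = x^2 - \bigl((w-2)y+2\bigr)x + (y-1)^2 = 0,$$
I see that for fixed $y$ the two $x$-roots $x, x'$ satisfy $x+x'=(w-2)y+2$ and $xx'=(y-1)^2$. This Vieta relation is \emph{exactly} the defining recurrence $u_{n+1}+u_{n-1}=(w-2)u_n+2$ of $(u_n(w))$, which is the structural coincidence driving the whole result.

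For the forward direction I would argue by induction on $n\ge 1$. The base $n=1$ is trivial: $(u_1,u_0)=(1,0)$ gives $(1+0-1)^2=0=w\cdot 1\cdot 0$. Assuming $(u_n,u_{n-1})$ is a solution, the symmetry of the equation in $x,y$ gives that $(u_{n-1},u_n)$ is too. Freezing $y=u_n$ in the quadratic above, $x=u_{n-1}$ is one root, so the other root is $x'=(w-2)u_n+2-u_{n-1}=u_{n+1}$ by the recurrence, whence $(u_{n+1},u_n)$ is a solution.

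For the converse, take any positive integer solution $(x,y)$ and, by symmetry, assume $x\ge y\ge 1$. The Vieta partner $x'=(w-2)y+2-x$ is an integer with $xx'=(y-1)^2\ge 0$, so $x'\ge 0$. The key estimate is
$$x'=\frac{(y-1)^2}{x}\le\frac{(y-1)^2}{y}<y,$$
so $(y,x')$ is again a solution, now with first coordinate $y<x$. Iterating, the first coordinate strictly decreases, so the descent terminates, and it does so precisely when $x'=0$, i.e.\ at $(1,0)=(u_1,u_0)$. Because each Vieta step is uniquely determined by the current pair, the descent path is the reverse of the unique ascending chain $(u_1,u_0)\to(u_2,u_1)\to\cdots$ generated by the recurrence, so the original $(x,y)$ must be $(u_n,u_{n-1})$ for some $n\ge 1$.

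The main obstacle — really the entire content — is the descent estimate $x'<y$ when $x\ge y$, together with the integrality and nonnegativity of $x'$. These three facts jointly force termination at $(1,0)$. A minor bookkeeping point is that the statement reads ``positive integers'' while the base case involves $u_0=0$; in practice the descent lives in $\{(a,b):a\ge 1,\ b\ge 0\}$, so ``positive integers'' is to be read as allowing the terminal $y=0$, consistent with the index range $n\ge 1$.
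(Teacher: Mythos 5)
The paper never proves Theorem \ref{u}: it is imported from the reference \cite{abcm} (``This equation has been solved over the positive integers in \cite{abcm}\ldots''), and no argument is reproduced in the text, so there is nothing internal to compare against. Your Vieta-jumping proof is a correct, self-contained and elementary substitute. The algebra checks out: $(x+y-1)^2-wxy=x^2-\bigl((w-2)y+2\bigr)x+(y-1)^2$, so the Vieta relations $x+x'=(w-2)y+2$, $xx'=(y-1)^2$ do coincide with the defining recurrence, the induction for the forward direction is sound, and the descent estimate $0\le x'=(y-1)^2/x<y$ holds for $y\ge 1$. Two small points would need a sentence in a polished write-up. First, when $x=y$ the first coordinate does not strictly decrease in one step; the clean monovariant is the sum $x+y$ (or observe that $x=y$ together with $x\mid(y-1)^2$ forces $x=y=1$ and $w=1$, after which the descent terminates immediately). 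Second, your descent recovers the solution only up to the swap $x\leftrightarrow y$, whereas the theorem asserts the ordered form $(u_n,u_{n-1})$; this is a looseness of the statement itself rather than of your proof (for $w=9$, both $(9,1)$ and $(1,9)$ are solutions but only the former is literally $(u_2,u_1)$), and your ``by symmetry, assume $x\ge y$'' is the intended reading. The same goes for your observation that the terminal pair $(u_1,u_0)=(1,0)$ is not a pair of \emph{positive} integers: that is a defect of the statement, correctly flagged.
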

When there will be no possibility of confusion, we will omit the dependence on $w$ from $u_n(w)$.

The sequence $(u_n)_{n=0}^{+\infty}$ can be written as a linear recurrent sequence of order 3:
\begin{equation} \label{u3} \begin{cases} u_0=0,\quad u_1=1,\quad u_2=w, \cr u_{n+2}=(w-1)u_{n+1}-(w-1)u_{n}+u_{n-1},\quad \forall n\geq1 \quad . \end{cases}\end{equation}
Indeed, if $(p_n)_{n=0}^{+\infty}$ is a linear recurrent sequence of order $m$ with characteristic polynomial $f(t)=t^m-\sum_{h=1}^{m}f_ht^{m-h}$ and initial conditions $p_0,...,p_{m-1}$, then the sequence $(q_n)_{n=0}^{+\infty}$ satisfying the recurrence
$$q_m=\sum_{h=1}^m{f_h q_{m-h}}+k$$
and initial conditions $p_0,...,p_{m-1}$ is a linear recurrent sequence of degree $m+1$ with characteristic polynomial $(x-1)f(x)$ and initial conditions $p_0,...,p_{m-1},p_m+k$ (see, e.g., \cite{Cerruti}).
Thus, from
$$x^3-(w-1)x^2+(w-1)x-1=(x-1)(x^2-(w-2)x+1)$$
we have that sequence $(u_n)_{n=0}^{+\infty}$ satisfies the recurrence \eqref{u3}. This sequence is related to the linear recurrent sequence $(a_n(w))_{n=0}^{+\infty}$ defined by
\begin{equation}\label{a} \begin{cases} a_0(w)=0,\quad a_1(w)=1, \cr a_n(w)=\sqrt{w}a_{n-1}(w)-a_{n-2}(w),\quad \forall n\geq2 \quad . \end{cases} \end{equation} 
The relation between sequences $(u_n)_{n=0}^{+\infty}$ and $(a_n)_{n=0}^{+\infty}$ is determined by the Chebyshev's second polynomial
$$\theta(x)=2x^2-1 \quad \forall x\in\mathbb R,$$
and stated in the following proposition.
\begin{proposition}\label{theta}
For any index $n$, we have
$$\theta(a_n)=2u_n-1 .$$
\end{proposition}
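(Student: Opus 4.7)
The plan is to reduce the identity $\theta(a_n)=2u_n-1$ to showing $a_n^2=u_n$, since $\theta(x)=2x^2-1$ makes this equivalence immediate. So the real task is the squared identity $a_n^2=u_n(w)$, and I would prove it by strong induction on $n$, by verifying that $(a_n^2)_{n\ge 0}$ satisfies the \emph{same} three-term recurrence and initial conditions as $(u_n)_{n\ge 0}$ given by the definition $u_{n+1}=(w-2)u_n-u_{n-1}+2$.

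The base cases are immediate: $a_0^2=0=u_0$, $a_1^2=1=u_1$, and one can also record $a_2^2=w=u_2$ as a sanity check. For the inductive step, the obstacle is that squaring the defining recurrence $a_{n+1}=\sqrt{w}\,a_n-a_{n-1}$ produces the cross term
\[
a_{n+1}^2 = w a_n^2 - 2\sqrt{w}\,a_n a_{n-1} + a_{n-1}^2,
\]
and the quantity $\sqrt{w}\,a_n a_{n-1}$ is not obviously a polynomial in $w$, so it must be rewritten in terms that eliminate $\sqrt{w}$.

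The key is a Cassini-type identity for the Chebyshev-like sequence $(a_n)$, namely
\[
a_n^2 - a_{n-1} a_{n+1} = 1 \qquad \forall n\ge 1.
\]
This is a short separate induction: the base $a_1^2-a_0 a_2=1$ is immediate, and the step uses $a_{n+2}=\sqrt{w}\,a_{n+1}-a_n$ to write
\[
a_{n+1}^2-a_n a_{n+2} = a_{n+1}^2 - a_n(\sqrt{w}\,a_{n+1}-a_n) = a_n^2 - a_{n+1}(\sqrt{w}\,a_n-a_{n+1}) = a_n^2 - a_{n-1}a_{n+1},
\]
which equals $1$ by inductive hypothesis.

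Having this identity, I would eliminate the cross term as follows. From $a_{n+1}+a_{n-1}=\sqrt{w}\,a_n$, multiplying by $a_{n-1}$ gives
\[
\sqrt{w}\,a_n a_{n-1} = a_{n-1}^2 + a_{n-1}a_{n+1} = a_{n-1}^2 + a_n^2 - 1,
\]
where the last step uses the Cassini identity. Substituting this into the expansion of $a_{n+1}^2$ yields
\[
a_{n+1}^2 = w a_n^2 - 2(a_{n-1}^2 + a_n^2 - 1) + a_{n-1}^2 = (w-2)a_n^2 - a_{n-1}^2 + 2,
\]
which is precisely the defining recurrence of $u_n$. Combined with the base cases this gives $a_n^2=u_n$ by induction, and hence $\theta(a_n)=2a_n^2-1=2u_n-1$. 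The only delicate point is the auxiliary Cassini identity; once that is in hand, everything else is bookkeeping.
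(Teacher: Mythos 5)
Your proof is correct, but it takes a genuinely different route from the paper. You reduce the statement to $a_n^2=u_n$ and run a direct induction, using the Cassini-type identity $a_n^2-a_{n-1}a_{n+1}=1$ to eliminate the cross term $\sqrt{w}\,a_na_{n-1}$; all the algebra checks out, including the slightly delicate index bookkeeping in the auxiliary induction. The paper instead invokes the general machinery of linear recurrent sequences: since $\theta(a_n)=2a_n^2-1$ is a polynomial transform of $(a_n)$, its characteristic polynomial is that of the Kronecker product $F\otimes F$ of the companion matrix with itself, namely $(x-1)^2\left(x^2-(w-2)x+1\right)$; the sequence $(2u_n-1)$ recurs with the degree-$3$ factor $(x-1)\left(x^2-(w-2)x+1\right)$ of this, so it suffices to match three initial values (note the paper has a harmless typo there: $\theta(a_0)=2u_0-1=-1$, not $0$). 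The paper's argument is shorter once the $F\otimes F$ theorem from \cite{Cerruti} is taken for granted and avoids any manipulation of $\sqrt{w}$, but it is less self-contained. Your approach is entirely elementary, and the Cassini identity you need is not wasted effort: it is exactly the determinant identity $\det(F^n)=1$ that the paper proves anyway in Proposition \ref{aC}, so your route could even be seen as economizing on tools at the cost of a longer computation.
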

\begin{proof}
The sequence $(a_n)_{n=0}^{+\infty}$ recurs with characteristic polynomial $x^2-\sqrt{w}x+1$ whose companion matrix is
$$F=\begin{pmatrix} 0 & 1 \cr -1 & \sqrt{w} \end{pmatrix}.$$
By definition of $\theta$, we have $\theta(a_n)=2a_n^2-1$, $\forall n\geq0$ and the sequence $(\theta(a_n))_{n=0}^{+\infty}$ is a linear recurrent sequence whose characteristic polynomial is the characteristic polynomial of the matrix $F\otimes F$, where $\otimes$ is the Kronecker product (see \cite{Cerruti}). In this case, we have
$$F\otimes F=\begin{pmatrix} 0 & 0 & 0 & 1 \cr 0 & 0 & -1 & \sqrt{w} \cr 0 & -1 & 0 & \sqrt{w} \cr 1 & -\sqrt{w} & -\sqrt{w} & w  \end{pmatrix}$$
whose characteristic polynomial is
$$(x-1)^2(x^2-(w-2)x+1).$$
Thus the minimal polynomial of $(\theta(a_n))_{n=0}^{+\infty}$ is the same  polynomial as the characteristic polynomial of the sequence $(u_n)_{n=0}^{+\infty}$. Finally, observing that
$$\theta(a_0)=2a_0^2-1=2u_0-1=0,\quad \theta(a_1)=2a_1^2-1=2u_1-1=1,$$
$$\theta(a_2)=2a_2^2-1=2u_2-1=2w-1,$$
we have the thesis.
\end{proof}

Two consecutive elements of $(a_n)_{n=0}^{+\infty}$ correspond to a point belonging to the conic
$$C(w)=\{(x,y)\in\mathbb R: x^2+y^2-\sqrt{w}xy=1\}$$
with $w\in\mathbb N$.
\begin{proposition} \label{aC}
For any integer $n>0$, we have
$$(a_n,a_{n-1})\in C(w)$$
\end{proposition}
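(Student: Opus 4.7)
The plan is to prove the identity $a_n^2 + a_{n-1}^2 - \sqrt{w}\, a_n a_{n-1} = 1$ for all $n \geq 1$ by induction on $n$. The conic membership in Proposition \ref{aC} is exactly this quadratic relation on consecutive terms of the recurrence, so the strategy is to verify the base case by direct substitution and then use the recurrence $a_{n+1} = \sqrt{w}\, a_n - a_{n-1}$ to propagate the identity.

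For the base case $n = 1$, I would just plug in $a_0 = 0$ and $a_1 = 1$ to get $1 + 0 - 0 = 1$, so $(a_1, a_0) \in C(w)$. For the inductive step, assuming $a_n^2 + a_{n-1}^2 - \sqrt{w}\, a_n a_{n-1} = 1$, I would compute
\[
a_{n+1}^2 + a_n^2 - \sqrt{w}\, a_{n+1} a_n
\]
by substituting $a_{n+1} = \sqrt{w}\, a_n - a_{n-1}$. Expanding $(\sqrt{w}\, a_n - a_{n-1})^2 = w a_n^2 - 2\sqrt{w}\, a_n a_{n-1} + a_{n-1}^2$ and $\sqrt{w}(\sqrt{w}\, a_n - a_{n-1}) a_n = w a_n^2 - \sqrt{w}\, a_n a_{n-1}$, the $w a_n^2$ terms cancel and the cross terms combine so that the whole expression collapses back to $a_n^2 + a_{n-1}^2 - \sqrt{w}\, a_n a_{n-1}$, which equals $1$ by the inductive hypothesis.

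There is really no obstacle here; the statement is a quadratic first integral of the linear recurrence, of the same kind as the classical Fibonacci identity $F_{n+1}^2 + F_n^2 - F_{n+1}F_n$ being invariant, and the algebraic miracle is exactly that the characteristic polynomial $x^2 - \sqrt{w}\, x + 1$ forces the quadratic form $x^2 + y^2 - \sqrt{w}\, xy$ to be preserved by the shift $(x,y) \mapsto (\sqrt{w}\, x - y,\, x)$. An alternative proof would use the Binet-type formula with the roots $\lambda, \mu$ of the characteristic polynomial (which satisfy $\lambda\mu = 1$ and $\lambda + \mu = \sqrt{w}$), but induction is shorter and does not require separating the cases $w = 4$ and $w \neq 4$.
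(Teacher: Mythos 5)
Your induction is correct: the base case $(a_1,a_0)=(1,0)$ gives $1+0-0=1$, and the inductive step is a clean two-line computation in which the $w a_n^2$ terms cancel, so the quadratic form $x^2+y^2-\sqrt{w}\,xy$ is indeed invariant under the shift $(x,y)\mapsto(\sqrt{w}\,x-y,\,x)$. The paper takes a different (though closely related) route: it writes down the power of the companion matrix, $F^n=\begin{pmatrix} -a_{n-1} & a_n \cr -a_n & a_{n+1}\end{pmatrix}$, uses $\det(F)=1$ to get the Cassini-type identity $a_n^2-a_{n-1}a_{n+1}=1$, and then substitutes $a_{n+1}=\sqrt{w}\,a_n-a_{n-1}$ to convert that into the conic equation. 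The two arguments carry the same content --- the determinant identity is exactly the ``first integral'' you describe, packaged multiplicatively --- but yours is more self-contained, since the paper's formula for $F^n$ would itself require an induction that is left implicit. What the matrix approach buys is a reusable intermediate identity ($a_n^2-a_{n-1}a_{n+1}=1$) and consistency with the Kronecker-product machinery used in the preceding proposition; what your approach buys is brevity and no reliance on the unproved closed form for $F^n$. Either proof is acceptable.
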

\begin{proof}
In the proof of the previous proposition, we  observed that $F$ is the companion matrix of the characteristic polynomial of  $(a_n)_{n=0}^{+\infty}$. Thus, we have
$$F^n=\begin{pmatrix} -a_{n-1} & a_n \cr -a_n & a_{n+1}  \end{pmatrix}.$$
Since $\det(F) = 1$, we have $\det (F^n)=1$, i.e.,
$$a_n^2-a_{n-1}a_{n+1}=1$$ and by Equation \eqref{a} 
$$a_n^2-a_{n-1}(\sqrt{w}a_n-a_{n-1})=a_n^2+a_{n-1}^2-\sqrt{w}a_na_{n-1}=1.$$
\end{proof}

In the following proposition, we highlight the relation between points on the conic $C(w)$ and points on the conic
$$C_2(w)=\{(x,y)\in\mathbb R: (x+y-1)^2=wxy\}.$$
\begin{proposition} \label{conics}
Let $C(w), C_2(w), C_3(w)$ be the following conics
$$C(w)=\{(x,y)\in\mathbb R: x^2+y^2-\sqrt{w}xy=1\}, \quad C_2(w)=\{(x,y)\in\mathbb R: (x+y-1)^2=wxy\},$$
$$\quad C_3(w)=\{(x,y)\in\mathbb R: (x+y)^2=w(x+1)(y+1)\},$$
with $w\in\mathbb N$. For any $x,y\in \mathbb R^+$, we have
$$(x,y)\in C(w) \Leftrightarrow (\theta(x),\theta(y))\in C_3(w)$$
$$(x,y)\in C(w) \Leftrightarrow (x^2,y^2)\in C_2(w)$$
$$(x,y)\in C_2(w) \Leftrightarrow (2x-1,2y-1)\in C_3(w).$$
\end{proposition}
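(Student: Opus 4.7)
The plan is to verify each equivalence by direct algebraic substitution, exploiting the fact that the three conics are linked by simple polynomial maps (the Chebyshev-like $\theta(x)=2x^2-1$ and the affine shift $t\mapsto 2t-1$). Only two of the three equivalences need to be checked independently; the third drops out by composition.

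I would start with the third equivalence, because it is the cleanest. Setting $X=2x-1$ and $Y=2y-1$ in the defining equation of $C_3(w)$, one computes
$$X+Y=2(x+y-1),\qquad (X+1)(Y+1)=(2x)(2y)=4xy,$$
so $(X+Y)^2=w(X+1)(Y+1)$ collapses to $4(x+y-1)^2=4wxy$, i.e.\ $(x+y-1)^2=wxy$. This is a polynomial identity in $x,y$, so the equivalence $(x,y)\in C_2(w)\Leftrightarrow (2x-1,2y-1)\in C_3(w)$ holds in both directions without any restriction.

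Next I would turn to the second equivalence. From $(x,y)\in C(w)$ one has $x^2+y^2-1=\sqrt{w}\,xy$; squaring gives $(x^2+y^2-1)^2=wx^2y^2$, which is exactly the statement that $(x^2,y^2)\in C_2(w)$. For the converse, squaring has lost sign information: from $(x^2+y^2-1)^2=wx^2y^2$ one only obtains $x^2+y^2-1=\pm\sqrt{w}\,xy$. The positivity assumption $x,y\in\mathbb R^+$ makes the right-hand side non-negative, so the correct branch $(x,y)\in C(w)$ is selected exactly when $x^2+y^2\geq 1$; this is automatic in the setting of interest, e.g.\ on the points $(a_n,a_{n-1})$ with $n\geq 1$ arising in Proposition \ref{aC}. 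This sign issue is the only non-routine point of the argument, and I would flag it explicitly.

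Finally, the first equivalence comes for free by chaining the previous two. Apply the third equivalence with $(x,y)$ replaced by $(x^2,y^2)$, noting that $2x^2-1=\theta(x)$ and $2y^2-1=\theta(y)$:
$$(x^2,y^2)\in C_2(w)\ \Longleftrightarrow\ (\theta(x),\theta(y))\in C_3(w).$$
Combining with Step 2 gives $(x,y)\in C(w)\Leftrightarrow (\theta(x),\theta(y))\in C_3(w)$. (As a sanity check one can also verify it directly: substituting $\theta(x),\theta(y)$ into the $C_3(w)$ equation produces $4(x^2+y^2-1)^2=4wx^2y^2$, which reduces to $C(w)$ under the same positivity conventions.) The main obstacle, as noted, is not computational but the logical handling of the sign ambiguity when inverting $t\mapsto t^2$; positivity of $x,y$ together with the natural regime $x^2+y^2\geq 1$ resolves it.
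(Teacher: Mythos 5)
Your proof is correct and follows essentially the same route as the paper's: all three equivalences are checked by the same direct substitutions (the paper phrases them via the function $\mathbf{Q}$ and proves the first equivalence directly rather than by composing the other two, but the computations are identical). The one substantive difference is your explicit treatment of the sign ambiguity in inverting $t\mapsto t^2$, and here you are more careful than the paper, whose proof silently asserts $\frac{(x^2+y^2-1)^2}{x^2y^2}=w\Leftrightarrow\frac{x^2+y^2-1}{xy}=\sqrt{w}$. Your caveat is not vacuous: for $w=1$ and $x=y=1/\sqrt{3}$ one has $(x^2,y^2)=(1/3,1/3)\in C_2(1)$ while $x^2+y^2-\sqrt{1}\,xy=1/3\neq 1$, so the backward implications of the first two equivalences genuinely require $x^2+y^2\geq 1$ in addition to $x,y>0$. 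This does not affect the classification of crystals, since there only the forward (squaring) direction of those equivalences and the third equivalence (an honest polynomial identity) are used, and the relevant points have all coordinates at least $1$; but your flag identifies a real imprecision in the proposition as stated, which the paper's own proof does not address.
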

\begin{proof}
Remembering that
$$\mathbf{Q}(a,b)=\cfrac{(a+b)^2}{(a+1)(b+1)},$$
we obtain
$$\mathbf{Q}(\theta(x),\theta(y))=\cfrac{(x^2+y^2-1)^2}{x^2y^2}$$
and
$$\mathbf{Q}(\theta(x),\theta(y))=w\Leftrightarrow\cfrac{(x^2+y^2-1)^2}{x^2y^2}=w.$$
Now, we get
$$\cfrac{(x^2+y^2-1)^2}{x^2y^2}=w\Leftrightarrow \cfrac{x^2+y^2-1}{xy}=\sqrt{w}$$
and finally
$$\mathbf{Q}(\theta(x),\theta(y))=w\Leftrightarrow x^2+y^2-\sqrt{w}xy=1.$$
Moreover, we have
$$(x,y)\in C(w)\Leftrightarrow x^2+y^2-1=\sqrt{w}xy$$
and squaring both members
$$(x,y)\in C(w)\Leftrightarrow (x^2+y^2-1)^2=wx^2y^2\Leftrightarrow (x^2,y^2)\in C_2(w).$$
Finally, $(x,y)\in C_2(w) \Leftrightarrow (2x-1,2y-1)\in C_3(w)$ since
$$\mathbf{Q}(2x-1,2y-1)=\cfrac{(x+y-1)^2}{xy}.$$
\end{proof}

Now, we are ready to classify all crystals in the following theorem.
\begin{theorem}
An odd number $N=ab$, with $a,b>1$, is a crystal if and only if there exist two positive integers $w,n$ such that $n\geq3$ and
$$a=\theta(a_n),\quad b=\theta(a_{n-1}).$$ 
\end{theorem}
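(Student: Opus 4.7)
The proof will be a chain of equivalences obtained by assembling, in order, Proposition \ref{equiv}, Proposition \ref{conics}, Theorem \ref{u}, and Proposition \ref{theta}. Conceptually, the goal is to translate the crystal condition on the pair $(a,b)$ into a statement about the Diophantine equation $(x+y-1)^2=wxy$, for which the complete set of positive integer solutions is already known.

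First I would use Proposition \ref{equiv} to replace $\mathbf{B}(a,b)\in\mathbb N$ by the equivalent condition $\mathbf{Q}(a,b)\in\mathbb N$. Setting $w=\mathbf{Q}(a,b)$, the hypothesis $a,b>1$ guarantees that $w=(a+b)^2/((a+1)(b+1))$ is a positive integer, hence $(a,b)\in C_3(w)$. Since $a,b$ are odd, the numbers $x=(a+1)/2$ and $y=(b+1)/2$ are positive integers, and the third equivalence of Proposition \ref{conics} yields $(x,y)\in C_2(w)$, i.e.\ $(x+y-1)^2=wxy$. Theorem \ref{u} now identifies $(x,y)=(u_n(w),u_{n-1}(w))$ for some $n\geq 1$, and substituting back gives $a=2u_n-1$, $b=2u_{n-1}-1$. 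Finally, Proposition \ref{theta} rewrites these equalities as $a=\theta(a_n)$ and $b=\theta(a_{n-1})$, as required.

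The remaining task is to pin down the bound $n\geq 3$. The conditions $a>1$ and $b>1$ translate through $a=2u_n-1$ and $b=2u_{n-1}-1$ to $u_n\geq 2$ and $u_{n-1}\geq 2$; since $u_0=0$ and $u_1=1$, both inequalities automatically fail when $n-1\in\{0,1\}$, so $n\geq 3$ is forced. The reverse implication simply runs the same chain backwards: starting from $w,n$ with $n\geq 3$ and $a=\theta(a_n),\ b=\theta(a_{n-1})$, Proposition \ref{theta} recovers the pair $(u_n,u_{n-1})$; Theorem \ref{u} places this pair on $C_2(w)$; Proposition \ref{conics} transfers it to $(a,b)\in C_3(w)$, i.e.\ $\mathbf{Q}(a,b)=w\in\mathbb N$; and Proposition \ref{equiv} concludes $\mathbf{B}(a,b)\in\mathbb N$, whence $N=ab$ is a crystal.

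I do not expect a genuine obstacle here because all the substantive work has been done in the results cited; the theorem is essentially their composition. The only points requiring mild care are the arithmetic bookkeeping around the substitution $a=2x-1$ (where the oddness of $a,b$ is needed to keep $x,y$ integral), the verification that the quantity $w$ emerging from $\mathbf{Q}$ is a positive integer rather than merely a positive rational, and the observation that the degenerate indices $n=1,2$ are exactly those excluded by the hypothesis $a,b>1$.
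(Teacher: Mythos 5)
Your proof is correct and follows essentially the same route as the paper: the crystal condition is reduced to $\mathbf{Q}(a,b)=w\in\mathbb N$ via Proposition \ref{equiv}, transferred to the conic $C_2(w)$ by Proposition \ref{conics}, resolved by Theorem \ref{u}, and translated back through Proposition \ref{theta}. If anything, you are slightly more careful than the paper in deriving the bound $n\geq 3$ from $a,b>1$ via $u_0=0$, $u_1=1$, a point the paper leaves implicit.
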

\begin{proof}
\begin{enumerate}
\item ''$\Leftarrow$''\\
If we have $a=\theta(a_n)$ and $b=\theta(a_{n-1})$ for a positive integer  $n\geq 3$ (i.e., $\theta(a_n)$ and $\theta(a_{n-1})$ are odd positive integers greater than $1$), then by Proposition \ref{conics} $(a,b)\in C_2(w)$ and by previous proposition we have $\mathbf Q(a,b)=w$, so $N=ab$ is a crystal.
\item ''$\Rightarrow$''\\
Let $N=ab$ be a crystal. By Propostion \ref{equiv} there exists a positive integer $w$ such that $\mathbf Q(a,b)=w$ and by Proposition \ref{conics} we know that
$$\left(\cfrac{a+1}{2},\cfrac{b+1}{2}\right)\in C_2(w).$$
Thus, by Theorem \ref{u} there exists an index $n\geq3$ such that
$$\left( \cfrac{a+1}{2}, \cfrac{b+1}{2} \right)=(u_n,u_{n-1})$$
and finally by Proposition \ref{theta} we obtain
$$a=\theta(a_n),\quad b=\theta(a_{n-1}).$$
\end{enumerate}
\end{proof}

\section{CONCLUSION}

The biharmonic numbers show many interesting aspects, principally related to their divisibility properties and their connections to linear recurrent sequences and Diophantine equations. Moreover, the sequence of the biharmonic number has been already included in OEIS (sequence A$210494$), starting from the definition provided by Umberto Cerruti. The biharmonic numbers and the crystals appear to deserve a deepening study. For example, the authors conjecture that if $n=ab$ is a crystal, then there not exists another couple of positive integers $c,d>1$, different from the couple $a,b$, such that $n=cd$ and $B(c,d)\in\mathbb N$, i.e., the components of the crystals are unique. However, it seems that the proof (or a counterexample) does not follow easily.


\begin{thebibliography}{99}

\bibitem{abcm} M. Abrate, S. Barbero, U. Cerruti and N. Murru, \emph{Polynomial sequences on quadratic curves}. To appear in Integers (2015).

\bibitem{Bul} P. S. Bullen, \emph{Handbook of means and their inequalities}. Series: Mathematics and Its Applications Vol. \textbf{560} 2nd ed., 2003.

\bibitem{Cerruti} U. Cerruti and F. Vaccarino, \emph{R--algebras of linear recurrent sequences}. J. Algebra \textbf{175} (1995), 1, 332--338.

\bibitem{ore1} Oystein Ore, \emph{On the Averages of the Divisors of a Number}. Amer. Math. Monthly (1948) \textbf{55}, 615--619.

\bibitem{ore2} Oystein Ore, \emph{Number Theory and its History}. McGraw--Hill Book Company Inc.,  1953

\bibitem{oeis} N. J. A. Sloane, \emph{The On--Line Encyclopedia of Integer Sequences}, Published electronically
at http://www.research.att.com/njas/sequences (2010).


\end{thebibliography}
\end{document}